\newtheorem{thm}{Theorem}
\newtheorem{lem}{Lemma}
\newtheorem{rem}{Remark}
\newtheorem{exm}{Example}
\newtheorem{col}{Corollary}
\newcommand{\D }{\Delta }
\title{Oscillatory properties of solutions of the fourth order difference equations with quasidifferences }
\author{
Robert Jankowski\footnote{Lodz University of Technology, Poland, email: rjjankowski@math.uwb.edu.pl}
\and
Ewa Schmeidel \footnote{University of Bialystok, Poland, email: eschmeidel@math.uwb.edu.pl}
\and
Joanna Zonenberg\footnote{University of Bialystok, Poland, email: jzonenberg@math.uwb.edu.pl}
}  
\date{}
\begin{document}
\maketitle
\noindent
{\bf Abstract.} A class of fourth--order neutral type difference equations with quasidifferences and deviating arguments is considered. Our approach is based on studying the considered equation as a system of a four--dimensional difference system. The sufficient conditions under which the considered equation has no quickly oscillatory solutions are given. Finally, the sufficient conditions under which the equation is almost oscillatory are presented.\\
{\small{\bf Keywords:} Fourth--order difference equation, neutral type, quickly oscillatory solutions, almost oscillatory.}\\
{\small{\bf AMS Subject classification:} 39A21, 39A10}

\section{Introduction}
In this article, we consider a class of fourth--order nonlinear difference equations of the form:
\begin{equation}
\label{eq:1}
 \D \left\lbrace a_n\left[ \D \left( b_n\left(\D ( c_n (\D (x_n + p_nx_{n-\delta}))^{\gamma} )\right) ^{\beta }\right) \right]^{\alpha} \right\rbrace  +d_nf(x_{n-\tau})=0
\end{equation}
where 
$\alpha$, $\beta$ and $\gamma $ are the ratios of odd positive integers, integers $\tau, \delta$ are deviating arguments, $\tau \neq \min\lbrace -4,\delta-4\rbrace$. Moreover, $(p_n)$ is a real sequence, $(d_n)$ is of one sign, and $(a_n), (b_n), (c_n)$ are positive real sequences defined for $n\in \mathbb{N}_0=\{n_0, n_0+1, \dots \}$, $n_0 \in \mathbb{N}=\lbrace 0, 1, 2, \dots \rbrace$, $n_0\geq  \max\lbrace 1, \delta, \tau \rbrace $. Function $f: \mathbb{R} \to \mathbb{R}$, where $\mathbb{R}$ denotes the set of real numbers. Here $\D$ is the forward difference operator defined for any real sequence $(y_n)$ by $\D y_n=y_{n+1}-y_n$. If $\tau> \min\lbrace -4,\delta-4\rbrace$, then obviously there exists a solution of equation \eqref{eq:1} that can be found recursively for given initial conditions. If $\tau< \min\lbrace -4,\delta-4 \rbrace$, then in order to ensure the existence of a unique solution of equation \eqref{eq:1} for given initial conditions, we assume in this paper that the function $f$ is invertible. 

In the last few years, great attention has been paid to the study of fourth--order nonlinear difference equations. It is interesting to extend the known oscillation criteria for a larger class of fourth order nonlinear difference equations with quasidifferences. The oscillatory and asymptotic properties of fourth order nonlinear difference equations were investigated, among many others, by: Agarwal, Grace and Manojlović \cite{AGM, AM_2009}, Do\v{s}l\'a and Krej\v{c}ov\'a \cite{DK2012}, Migda, Musielak, Popenda, Schmeidel and Szmanda \cite{MS2004,MMS2004,MMS2006,PJ,bib8,bib9,bib10}, Smith and Taylor \cite{ST}, and Arockiasamy, Dhanasekaran, Graef, Pandian and Thandapani \cite{TG,TPDG,TA}, and references therein.

The background for difference equations can be found in the well known monographs, see, for example: Agarwal, Bohner, Grace and O'Regan \cite{monografiaAB}, Agarwal \cite{monografiaA}, and Kelly and Peterson \cite{monografiaP}.

Set 
\begin{equation}\label{z}
z_n=x_n + p_nx_{n-\delta}.
\end{equation}
The sequence $(z_n)$ is called companion sequence of a sequence $(x_n)$ relative to $(p_n)$.

By a solution of equation \eqref{eq:1} we mean a real sequence $(x_n)$ satisfying equation \eqref{eq:1} for $n\in \mathbb{N}_0$. A nontrivial solution $(x_n)$ of \eqref{eq:1} is called nonoscillatory if it is either eventually positive or eventually negative, and it is otherwise oscillatory. Equation \eqref{eq:1} is called almost oscillatory if all its solutions are oscillatory or
\[
\lim\limits_{n \to \infty}x_n =0.
\]

A solution  $(x_n)$ of equation \eqref{eq:1} is called quickly oscillatory if
$$x_n=(-1)^nq_n, \textrm{ where } q_n \textrm{ is of one sign for } n\in \mathbb{N}_0.$$

If $p_n\equiv 0$ and $f(x) = x^\lambda$, then equation \eqref{eq:1} takes the following form
\[
\D(a_n(\D b_n(\D c_n(\D x_n)^{\gamma })^{\beta })^{\alpha})+d_n x_{n+\tau}^\lambda=0.
\]
Since in our consideration $\tau \in \mathbb{Z}$ we see that this special case of equation \eqref{eq:1} with negative $\tau$ was studied in \cite{DK2012}. The results presented in this paper have improved and generalized those obtained by Do\v{s}l\'a and Krej\v{c}ov\'a.  

We consider \eqref{eq:1} as a four--dimensional system. If
\[
y_n=c_n(\D z_n)^{\gamma}, \qquad w_n=b_n(\D y_n)^{\beta}, \qquad t_n=a_n(\D w_n)^{\alpha},
\]
then equation \eqref{eq:1} can be written as the nonlinear system
\begin{equation}
\label{eq:S}
\left\{ \begin{array}{l}
        \D (x_n + p_nx_{n-\delta})=C_n y_n^{\frac{1}{\gamma }},\\
        \D y_n=B_nw_n^{\frac{1}{\beta }},\\
        \D w_n=A_nt_n^{\frac{1}{\alpha }},\\
        \D t_n=-d_nf(x_{n-\tau}), \end{array} \right. 
\end{equation}
where
\begin{equation}\label{e4}
A_n=a_n^{-\frac{1}{\alpha }}, \qquad B_n=b_n^{-\frac{1}{\beta}}, \qquad C_n=c_n^{-\frac{1}{\gamma }}.
\end{equation}

In this paper we study the properties of solutions of equation \eqref{eq:1}. Firstly, we show the influence of the deviating arguments $\delta$ and  $\tau $ on the existence of quickly oscillatory solutions of \eqref{eq:1}. Next, some monotonic properties of solution of the considered equation written as system \eqref{eq:S} are given. Finally, the sufficient conditions under which equation \eqref{eq:1} is almost oscillatory are given. The results are illustrated by examples.

\section{Some basic lemmas}
In 2005, Migda and Migda presented the following result which will be used in the sequel (see \cite{MM2005}, Lemma 1). 
\begin{lem}\label{MM1}
Let $(x_n)$, $(p_n)$ be a real sequences and $(z_n)$ be a sequence define by \eqref{z}, for $n\geq \delta$. Assume that $(x_n)$ is bounded, $\lim\limits_{n \to \infty}z_n = l \in \mathbb{R}$ ,  $\lim\limits_{n \to \infty}p_n = p \in \mathbb{R}$. If $\vert p \vert\neq 1$, then $(x_n)$ is convergent and $\lim\limits_{n \to \infty}x_n = \frac{l}{1+p}$.
\end{lem}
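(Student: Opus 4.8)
The plan is to control the bounded sequence $(x_n)$ through its two accumulation extremes. Set $L = \limsup_{n\to\infty} x_n$ and $\ell = \liminf_{n\to\infty} x_n$; both are finite because $(x_n)$ is bounded, and it suffices to prove $L = \ell = \frac{l}{1+p}$. Throughout I would rely on two elementary facts: a shift of indices leaves $\limsup$ and $\liminf$ unchanged (so $\limsup x_{n-\delta} = L$ and $\liminf x_{n-\delta} = \ell$), and multiplying a bounded sequence by a convergent sequence $p_n \to p$ rescales its extremes, namely $\limsup(p_n x_n) = pL$ and $\liminf(p_n x_n) = p\ell$ when $p > 0$, with the roles of $L$ and $\ell$ interchanged when $p < 0$. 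The argument then splits according to whether $|p| < 1$ or $|p| > 1$, and the hypothesis $|p| \neq 1$ is exactly what makes the resulting linear relations solvable.

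For $|p| < 1$ I would read the definition \eqref{z} as $x_n = z_n - p_n x_{n-\delta}$. Since $z_n \to l$, taking $\limsup$ and $\liminf$ of both sides and using the facts above yields, in the case $p > 0$, the pair
\[
L = l - p\ell, \qquad \ell = l - pL.
\]
Subtracting gives $(1-p)(L - \ell) = 0$, and since $p \neq 1$ we conclude $L = \ell$; substituting back gives $(1+p)L = l$, i.e. $\lim x_n = \frac{l}{1+p}$. The subcase $p < 0$ is even quicker: the sign flip in the product rule decouples the two equations, so each of $L$ and $\ell$ is found directly to equal $\frac{l}{1+p}$, while $p = 0$ is immediate since $p_n x_{n-\delta} \to 0$.

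For $|p| > 1$ the recursion $x_n = z_n - p_n x_{n-\delta}$ points the wrong way, so instead I would solve \eqref{z} for the lagged term and shift the index by $\delta$, obtaining
\[
x_n = \frac{z_{n+\delta}}{p_{n+\delta}} - \frac{1}{p_{n+\delta}}\, x_{n+\delta}.
\]
Here $\frac{z_{n+\delta}}{p_{n+\delta}} \to \frac{l}{p}$ while the coefficient $\frac{1}{p_{n+\delta}} \to \frac{1}{p}$ now has modulus less than $1$, so the same $\limsup$/$\liminf$ computation applies with $\frac{1}{p}$ in place of $p$. This produces $(p-1)(L-\ell)=0$ when $p>1$, or a direct decoupling when $p<-1$, and in every case one arrives at $(1+p)L = (1+p)\ell = l$. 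The main obstacle is bookkeeping rather than depth: one must keep the direction of the recursion matched to the size of $|p|$ and carefully track the interchange of $\limsup$ and $\liminf$ under multiplication by a negative limit. Once these sign and index conventions are fixed, each case collapses to a $2\times 2$ linear system whose determinant is nonzero precisely because $|p| \neq 1$.
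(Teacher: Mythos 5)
Your proof is correct, but there is nothing in the paper to compare it against: the paper does not prove this lemma at all, it imports it as a black box from Migda and Migda \cite{MM2005} (their Lemma 1). So your argument should be judged on its own, and it holds up. The two elementary facts you invoke are valid for bounded sequences (shift invariance of $\limsup$/$\liminf$, and $\limsup(p_nx_n)=p\limsup x_n$ for $p>0$ with the extremes swapped for $p<0$, which follows from $(p_n-p)x_n\to 0$); the four sign-and-size cases plus $p=0$ are each handled with the right algebra; and the reversed recursion $x_n=\frac{z_{n+\delta}}{p_{n+\delta}}-\frac{1}{p_{n+\delta}}x_{n+\delta}$ used when $\vert p\vert>1$ is legitimate because $p_n\neq 0$ eventually. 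The hypothesis $\vert p\vert\neq 1$ enters exactly where you locate it: $p\neq 1$ makes the coupled $2\times 2$ system for $(L,\ell)$ nondegenerate in the positive-$p$ cases, and $p\neq -1$ keeps the factor $1+p$ invertible in the decoupled negative-$p$ cases. One stylistic remark: the subtraction trick and the decoupling are really the same mechanism (the system $L=l'-qM$, $\ell=l'-qm$ with $\vert q\vert<1$, where $(M,m)$ is $(\ell,L)$ or $(L,\ell)$ according to the sign of $q$, always forces $L=\ell$), so the case analysis could be compressed; but as written each branch is sound, and your proof has the merit of making the paper self-contained where it currently relies on an external citation.
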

The next lemma is a simple generalization of Lemma 2 from Migda and Migda paper \cite{MM2005}.
\begin{lem}\label{MM2}
Assume that $x \colon \mathbb{N} \to \mathbb{R}$ and
\begin{equation}\label{lp}
\lim\limits_{n \to \infty}p_n = p \,\, \mbox{ where } \,\, \vert p \vert < 1.
\end{equation}
If sequence $(z_n)$ defined by \eqref{z} is bounded, then sequence $(x_n)$ is bounded too.
\end{lem}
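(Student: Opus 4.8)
The plan is to solve the neutral relation \eqref{z} for $x_n$ and then iterate the resulting one-step estimate, exploiting the contraction factor furnished by the hypothesis $|p|<1$ in \eqref{lp}. First I would use \eqref{lp} to fix a constant $q$ with $|p|<q<1$ together with an index $n_1$ such that $|p_n|\le q$ for all $n\ge n_1$; this is possible precisely because the limit $p$ lies strictly inside the unit interval. Writing $M=\sup_n|z_n|<\infty$ (finite by the boundedness assumption on $(z_n)$) and rearranging \eqref{z} as $x_n=z_n-p_nx_{n-\delta}$, I obtain
\[
|x_n|\le M+q\,|x_{n-\delta}|,\qquad n\ge n_1 .
\]

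Next I would iterate this inequality, treating $\delta$ as a positive delay. For any $n\ge n_1+\delta$ set $k=\lfloor (n-n_1)/\delta\rfloor$, so that the shifted index $n-k\delta$ lands in the fixed initial block $[n_1,n_1+\delta)$ while every intermediate index $n-j\delta$ ($0\le j\le k-1$) stays at least $n_1$, keeping each application of the estimate legitimate. Applying the one-step bound $k$ times and summing the geometric progression gives
\[
|x_n|\le M\sum_{i=0}^{k-1}q^{\,i}+q^{k}|x_{n-k\delta}|\le \frac{M}{1-q}+q^{k}K,
\]
where $K=\max\{|x_j|:n_0\le j<n_1+\delta\}$ is a finite constant bounding the finitely many initial terms. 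Since $0<q<1$ we have $q^{k}\le 1$, whence $|x_n|\le M/(1-q)+K$ for every $n\ge n_1+\delta$; the remaining finitely many terms are bounded by $K$, and therefore $(x_n)$ is bounded.

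The computational core — the geometric sum and its value $1/(1-q)$ — is routine, so the main obstacle is the bookkeeping of indices: one must verify that each of the $k$ successive applications uses an index at least $n_1$ (where $|p_n|\le q$ is available) and that the terminal index $n-k\delta$ genuinely falls in the finite initial block supplying $K$. A secondary point is the sign of the deviating argument: the iteration as written presumes $\delta\ge 1$, as is standard for the neutral term in \eqref{z}. Should one wish to allow $\delta\le 0$, the same conclusion follows from a record-maximum variant, in which one evaluates the one-step estimate at an index attaining $\max_{n_1\le j\le n}|x_j|$ and derives a contradiction with unboundedness. Either route yields a uniform bound governed by $M/(1-q)$, which is finite exactly because $|p|<1$.
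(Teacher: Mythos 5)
Your proof is correct and follows essentially the same route as the paper's: fix a constant strictly between $\vert p\vert$ and $1$ (the paper takes $P=\frac{1+\vert p\vert}{2}$ where you take an arbitrary $q$), use the triangle inequality on $x_n=z_n-p_nx_{n-\delta}$ to get the one-step bound, iterate down to a finite initial block, and sum the geometric series to obtain a uniform bound of the form $K+\frac{M}{1-q}$. Your explicit floor-function bookkeeping for the number of iterations and the aside about $\delta\le 0$ are the only additions; the argument is otherwise the paper's own.
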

\begin{proof}
Let $L > 0$ be such that $\vert z_n \vert \leq L$ for $n \geq 1$. Set $P\colon =\frac{1+ \vert p \vert}{2}$. From \eqref{lp}, we get $0< P < 1$, and there exists $n_1 \in \mathbb{N}$ such that $\vert p_n \vert \leq P$, for $n\geq n_1$. Set  
\begin{equation}\label{K}
K= \max\lbrace \vert x_{n_1}\vert, \vert x_{n_1+1}\vert, \dots  \vert x_{n_1+\delta+1}\vert \rbrace.
\end{equation}
Let $n \geq n_1$ be an arbitrary integer. Then there exists $m \in \mathbb{N}$ such that $n_1 \leq n - m\delta \leq n_1+\delta$. From \eqref{z}, we have 
\[
\vert x_n \vert \leq \vert z_n \vert + P \vert x_{n-\delta} \vert \leq L + P \vert x_{n-\delta} \vert.
\]
Analogously
\[
\vert x_{n-\delta} \vert \leq L + P \vert x_{n-2\delta} \vert,
\]
and
\[
\vert x_n \vert \leq L + P L + P^2 \vert x_{n-2\delta} \vert.
\]
After $m$ steps, we obtain
\[
\vert x_n \vert \leq L + P L + P^2 L + \dots + P^{m-1}L + P^m \vert x_{n-m\delta} \vert.
\]
Using \eqref{K}, we have $\vert x_{n-m\delta} \vert \leq K$. Since $0<P<1$, we obtain $P^m \vert x_{n-m\delta} \vert < K$. From the above,
\[
\vert x_n \vert \leq L(1+ P  + P^2 + \dots + P^{m-1}) + K \leq K + \frac{L}{1-P}.
\]
This completes the proof. 
\end{proof}
Note that, putting $p_n \equiv p$ in the above Lemma, we get Lemma 2 from paper \cite{MM2005}.
\section{Existence of quickly oscillatory solutions}

In this section necessary conditions for the existence of quickly oscillatory solution of equation \eqref{eq:1} are presented.

\begin{thm}\label{Th.1}
Assume that $p_n\geq 0$, $d_n >0$ for $n \in N_0$, $\delta $ is even, and 
\begin{equation}\label{e1}
xf(x)>0, \mbox{ for }x\neq 0.
\end{equation}
If $\tau$ is even, then equation \eqref{eq:1} has no quickly oscillatory solutions with positive even terms. If $\tau$ is odd, then equation \eqref{eq:1} has no quickly oscillatory solutions with positive odd terms.
\end{thm}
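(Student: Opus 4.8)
The plan is to argue by contradiction, reading \eqref{eq:1} through the system \eqref{eq:S} and exploiting that $\alpha,\beta,\gamma$ and their reciprocals are ratios of odd positive integers, so every power occurring in \eqref{eq:S} preserves signs. Suppose $\tau$ is even and that \eqref{eq:1} has a quickly oscillatory solution with positive even terms, so $x_n=(-1)^nq_n$ with $q_n>0$ for large $n$. Because $\delta$ is even, $x_{n-\delta}=(-1)^{n-\delta}q_{n-\delta}=(-1)^nq_{n-\delta}$, and since $p_n\ge 0$ the companion sequence \eqref{z} is $z_n=(-1)^n(q_n+p_nq_{n-\delta})$ with a strictly positive bracket; hence $(z_n)$ is quickly oscillatory with positive even terms as well.

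The engine of the proof is a single elementary observation that I would record first: if $(u_n)$ is quickly oscillatory, $u_n=(-1)^ns_n$ with $s_n$ of one sign, then $\Delta u_n=(-1)^{n+1}(s_{n+1}+s_n)$ is again quickly oscillatory but with the positive part moved to the opposite parity, whereas multiplying by a positive sequence and raising to an odd-integer-ratio power leaves the sign pattern intact. Applying these two rules alternately along the chain $y_n=c_n(\Delta z_n)^{\gamma}$, $w_n=b_n(\Delta y_n)^{\beta}$, $t_n=a_n(\Delta w_n)^{\alpha}$ coming from \eqref{eq:S}, I would track the sign of each quasidifference. Reaching $\Delta t_n$ costs exactly four forward differences, so the parity of the positive part is reversed four times and $(\Delta t_n)$ inherits the sign pattern of $(z_n)$; in particular $\Delta t_n>0$ at every even index.

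The contradiction comes from the fourth equation $\Delta t_n=-d_nf(x_{n-\tau})$ of \eqref{eq:S}. Taking $n$ even and using that $\tau$ is even, we get $x_{n-\tau}=(-1)^{n-\tau}q_{n-\tau}=q_{n-\tau}>0$, so $f(x_{n-\tau})>0$ by \eqref{e1}, and with $d_n>0$ this forces $\Delta t_n<0$ at even $n$ — contradicting $\Delta t_n>0$ from the sign chain. This rules out the stated solutions. For the second assertion I would repeat the identical bookkeeping for $\tau$ odd and a quickly oscillatory solution with positive odd terms (that is $q_n<0$): now $z_n$ carries its positive part on the odd indices, the four differences again reproduce this pattern, and the odd shift $x_{n-\tau}$ is arranged so that, via \eqref{e1} and $d_n>0$, the quantity $-d_nf(x_{n-\tau})$ once more takes the sign opposite to the one produced by the chain.

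The one genuinely delicate point is the parity bookkeeping in the middle step: at each of the four stages one must check both that the odd-ratio exponents really do preserve signs (this is exactly where that hypothesis on $\alpha,\beta,\gamma$ is needed) and that a single $\Delta$ flips the parity of the positive part, and then follow how the shift by $\tau$ interacts with the parity of the index chosen for the final comparison. The hypotheses that $\delta$ is even (so the neutral term cannot disturb the oscillation pattern of $z_n$), that $p_n\ge 0$ and $d_n>0$ (fixing the strict signs), and the prescribed parity of $\tau$ are precisely what make the two independent sign determinations of $\Delta t_n$ incompatible.
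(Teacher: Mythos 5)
Your treatment of the first assertion (even $\tau$, no solutions with positive even terms) is correct and is essentially the paper's own proof: the same chain $z\to y\to w\to t$, the same two rules (each $\Delta$ flips the parity of the positive part; odd-ratio powers and positive coefficients preserve signs), and the same clash at even indices in the fourth equation of \eqref{eq:S}.

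The gap is in the second assertion. You claim that for $\tau$ odd and a solution with positive odd terms the ``identical bookkeeping'' again makes $-d_nf(x_{n-\tau})$ take the sign opposite to the one the chain assigns to $\Delta t_n$; it does not. Run the computation: if $x_n>0$ for odd $n$ (so $x_n<0$ for even $n$), then $z_n$ has its positive part on odd indices, and after four parity flips $\Delta t_n$ is positive at odd indices, as you say. Now take $n$ odd: since $\tau$ is odd, $n-\tau$ is even, so $x_{n-\tau}<0$, hence $f(x_{n-\tau})<0$ by \eqref{e1}, and with $d_n>0$ this gives $-d_nf(x_{n-\tau})>0$ --- the \emph{same} sign the chain produces, not the opposite. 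At even $n$ both determinations come out negative. So the two sign computations are consistent and no contradiction arises. The structural reason is that reversing the parity of the solution reverses the chain's sign pattern and the forcing term's sign pattern simultaneously, so the solution's parity cancels out: a clash occurs precisely when $\tau$ is even, for \emph{either} parity of the solution, and never when $\tau$ is odd. Consequently the second assertion cannot be proved this way; in fact it is false as stated. In the paper's own Example 1 the nonlinearity $f=\operatorname{sgn}$ is odd, so along with $x_n=(-1)^n2^n$ the sequence $-x_n=(-1)^{n+1}2^n$ is also a solution; it is quickly oscillatory with positive odd terms, $\tau$ there is odd, and all hypotheses of the theorem hold. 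You are in good company --- the paper itself dismisses this case with ``the proof is analogous and hence is omitted'' --- but the analogy breaks exactly at the final sign comparison, and no choice of index parity rescues it.
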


\begin{proof}
Let 
\begin{equation}\label{e3}
x_n=(-1)^nq_n \mbox{ be a quickly oscillatory solution of \eqref{eq:1}.}
\end{equation}
Assume also that $(q_n)$ is a positive sequence. 
Since $\delta $ is even, we get
\begin{align*}
\Delta z_n=\Delta(x_n+p_nx_{n-\delta})= x_{n+1}+p_{n+1}x_{n+1-\delta}-x_n-p_nx_{n-\delta}\\
=(-1)^{n+1}(q_{n+1}+q_n+ p_{n+1}q_{n-\delta+1}+p_nq_{n-\delta}).
\end{align*}
Hence $(\Delta z_n)$ is a quickly oscillatory sequence and we can write $\Delta z_n=(-1)^{n+1}s_n$ where $s_n = q_{n+1}+q_n+ p_{n+1}q_{n-\delta+1}+p_nq_{n-\delta} >0$. From the first equation of system $ \eqref{eq:S}$ we have
$$ y_n= \left(\frac{\D z_n}{C_n }\right)^{\gamma} =(-1)^{n+1}r_n, $$
where $r_n=(\frac{s_n}{C_n})^{\gamma} >0$. From the second equation of $ \eqref{eq:S}$ we get
$$ w_n= \left( \frac{\D y_n}{B_n }\right) ^{\beta } =(-1)^{n}l_n, $$
where $l_n=\left( \frac{r_{n+1}}{B_n}+\frac{r_n}{B_n} \right)^{\beta} >0$. Repeating argument, we get from the third equation of $ \eqref{eq:S}$ the following equality
$$ t_n= \left( \frac{\D w_n}{A_n }\right) ^{\alpha } =(-1)^{n+1}g_n, $$
where $g_n=\left( \frac{l_{n+1}}{A_n}+\frac{l_n}{A_n} \right)^{\alpha} >0$. Consequently, from the fourth equation we have
$$ \D t_n=(-1)^{n}(g_{n+1}+g_n)=-d_nf(x_{n-\tau}).$$
Hence
\begin{equation}\label{e2}
(-1)^{n+1}(g_{n+1}+g_n)=d_nf(x_{n-\tau}).
\end{equation}
By \eqref{e1}, we have $x_{n-\tau}f(x_{n-\tau})>0$. From \eqref{e3} we see that $x_{n-\tau}=(-1)^{n-\tau}q_{n-\tau}$. Since $\tau$ is even we get $f(x_{n-\tau})$ is positive for even $n$. The left--hand side of equality \eqref{e2} is negative for even $n$, whereas the right--hand side is positive and vice versa. This contradiction ended the proof in the case of even $\tau$.

For odd $\tau$ the proof is analogous and hence is omitted.
\end{proof}

\begin{rem}
Assume that $p_n\geq 0$, $d_n <0$ for $n \in N_0$, $\delta $ is even, and condition \eqref{e1} is satisfied. If $\tau$ is even, then equation \eqref{eq:1} has no quickly oscillatory solutions with positive odd terms. If $\tau$ is odd, then equation \eqref{eq:1} has no quickly oscillatory solutions with positive even terms.
\end{rem}

The following examples illustrate Theorem \ref{Th.1}. Here $\delta$ is even and equation of the form \eqref{eq:1} has a quickly oscillatory solution with positive even terms.

\begin{exm}
Consider the equation
\begin{equation}\label{eq:6}
\D ^2\left(\D ^2 \left(x_n+\frac{1}{2^n}x_{n-2\lambda }\right)\right)^{\beta }
+d_n\operatorname{sgn}(x_{n-\tau})=0,
\end{equation}
where $\tau$, $\lambda $ are some positive integers, and $\tau$ is odd,
\[
d_n=(2^{n+2}3^2+2^{2-2\lambda })^{\beta }
+2(2^{n+1}3^2+2^{2-2\lambda })^{\beta }+(2^{n}3^2+2^{2-2\lambda })^{\beta }>0,
\]
and $f(x)=\operatorname{sgn}x$. Equation \eqref{eq:6} has a quickly oscillatory solution $x_n=(-1)^n2^n$. 
\end{exm}

\begin{exm}
Consider the equation
\begin{equation}\label{eq:7}
\D ^2\left(\D ^2\left(x_n+\frac{1}{3^n}x_{n-{2\lambda }}\right)\right)^{\beta }+d_nx_{n-\tau}=0
\end{equation}
where $\lambda$ is some positive integer, $\tau $ is an odd integer and
\[ 
d_n=\left(4+\frac{4^2}{3^{n+4}}\right)^{\beta}+2\left(4+\frac{4^2}{3^{n+3}}\right)^{\beta}+\left(4+\frac{4^2}{3^{n+2}}\right)^{\beta}>0
\]
and $f(x)=x$. Equation \eqref{eq:7} has a quickly oscillatory solution $x_n=(-1)^n$.
\end{exm}

\section{Almost oscillatory property}

In this section we assume that there exists a finite limit of sequence $(p_n)$. After some lemmas concerning the behavior of solution of system \eqref{eq:S}, the sufficient conditions under which equation \eqref{eq:1} is almost oscillatory are presented.  

\begin{lem}
Assume that conditions \eqref{lp} and \eqref{e1} are satisfied. If $(x,y,w,t)$ is a solution of system \eqref{eq:S} with bounded first component and such that one of its components is of one sign, then one of the following two cases hold:
\begin{enumerate}
\item[1)]
all its components are of one sign for enough large $n$;
\item[2)] 
sequence $(y_n)$ is of one sign and $\lim\limits_{n \to \infty}x_n = 0$.
\end{enumerate}

\end{lem}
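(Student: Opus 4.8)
The plan is to exploit the sign-preserving nature of the odd-ratio root maps together with \eqref{e1}, turning the four scalar equations of system \eqref{eq:S} into a chain of monotonicity relations, and then to read off the sign structure of the solution. The engine of the argument is the observation that, since $\alpha,\beta,\gamma$ are ratios of odd positive integers and $A_n,B_n,C_n>0$, each of the maps $u\mapsto u^{1/\gamma}$, $u\mapsto u^{1/\beta}$, $u\mapsto u^{1/\alpha}$ is odd and strictly increasing, hence sign preserving. Reading off \eqref{eq:S}, this gives, with $z_n$ the companion sequence \eqref{z},
\[
\operatorname{sgn}(\Delta z_n)=\operatorname{sgn}(y_n),\qquad
\operatorname{sgn}(\Delta y_n)=\operatorname{sgn}(w_n),\qquad
\operatorname{sgn}(\Delta w_n)=\operatorname{sgn}(t_n),
\]
while \eqref{e1} together with the fact that $(d_n)$ is of one sign yields $\operatorname{sgn}(\Delta t_n)=-\operatorname{sgn}(d_n)\operatorname{sgn}(x_{n-\tau})$. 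Consequently, whenever one of the driving sequences $x$, $t$, $w$ or $y$ is eventually of one sign, its successor in the chain $x\to t\to w\to y\to z$ has an eventually one-signed first difference, so it is eventually monotone and therefore eventually of one sign.

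Next I would use this propagation to show that, no matter which of the four components $x,y,w,t$ is assumed to be of one sign, the sequence $(y_n)$ is eventually of one sign. If the one-signed component is $t$, $w$ or $y$, this follows by pushing the implication down the chain $t\Rightarrow w\Rightarrow y$; if it is $x$, then $x_{n-\tau}$ is one signed, whence $(t_n)$ becomes monotone and one signed and we descend the chain again. Once $(y_n)$ is of one sign, $(z_n)$ is eventually monotone. Since $(x_n)$ is bounded by hypothesis and $(p_n)$ converges (hence is bounded) by \eqref{lp}, the sequence $(z_n)=(x_n+p_nx_{n-\delta})$ is bounded, and a bounded monotone sequence converges, say $z_n\to l$. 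As $|p|<1$ by \eqref{lp}, Lemma \ref{MM1} then applies and gives $x_n\to l/(1+p)=:L$.

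Finally I would split according to the value of $L$. If $L\neq 0$, then $(x_n)$ is eventually of one sign, so $x_{n-\tau}$ is one signed; feeding this back through $\operatorname{sgn}(\Delta t_n)$ forces $(t_n)$ to be monotone and eventually one signed, and then the chain $t\Rightarrow w\Rightarrow y$ makes $(w_n)$ and $(y_n)$ one signed as well. Hence all four components are eventually of one sign, which is case 1). If instead $L=0$, then $\lim_{n\to\infty}x_n=0$ while $(y_n)$ has already been shown to be of one sign, which is precisely case 2).

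The main obstacle I anticipate lies in the careful bookkeeping of the propagation step: a monotone sequence need not be strictly one-signed at every index, so I must treat the borderline situations (monotone sequences converging to $0$, or components vanishing identically from some index on) in order to justify the "eventually of one sign" conclusion at each rung of the ladder. I also need to verify that the feedback from $x$ back to $t$ genuinely closes the loop, and that it does so exactly in the branch $L\neq 0$, so that the two alternatives of the conclusion are cleanly separated.
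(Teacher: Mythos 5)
Your proposal is correct and takes essentially the same approach as the paper's proof: propagate signs through the chain of equations in \eqref{eq:S} until $(y_n)$ is of one sign and $(z_n)$ is monotone, then apply Lemma \ref{MM1} to get $x_n \to l/(1+p)$ and split on whether this limit vanishes. Your small deviations are harmless---you bound $(z_n)$ directly from the boundedness of $(x_n)$ instead of excluding the paper's unbounded case $(i)$, and you skip the paper's (redundant) appeal to Lemma \ref{MM2}---and the borderline ``one sign'' issue you flag resolves itself by strict monotonicity, since $d_n \neq 0$, $A_n, B_n, C_n > 0$ and \eqref{e1} make every relevant difference strictly nonzero.
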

\begin{proof}
Firstly, we assume that sequence $(x_n)$ is positive. (For negative $(x_n)$ the proof is analogous and hence omitted.) Since $(d_n)$ is of one sign for $n\geq n_0$, and by \eqref{e1}, we have that $(\D t_n)$ is also of one sign. It means that if $\D t_n >0$, then sequence $(t_n)$ is increasing, and if $\D t_n<0$, then sequence $(t_n)$ is decreasing. In both cases sequence $(t_n)$ is of one sign for large $n$. This implies, by the third equation of system \eqref{eq:S}, that $\D w_n$ is of one sign for large $n$. Repeating the same arguments as before, we obtain that sequence $(w_n)$ is of one sign, and consequently $(y_n)$ is of one sign for large $n$. Hence, condition $1)$ of the thesis is fulfilled.

Next, we assume that $(y_n)$ is of one sign for $n \in \mathbb{N}_0$. By the first equation of system \eqref{eq:S}, we get that sequence $(z_n)$ defined by \eqref{z} is monotonic for large $n$. Hence $\lim\limits_{n \to \infty}z_n$ exists. Let us consider three possible cases:
\begin{enumerate}
\item[$(i)$]
\[
\lim\limits_{n \to \infty}z_n = \pm \infty,
\]
\item[$(ii)$]
\[
\lim\limits_{n \to \infty}z_n = l \neq 0,
\]
\item[$(iii)$]
\[
\lim\limits_{n \to \infty}z_n = 0.
\]
\end{enumerate}
Case $(i)$. If $(z_n)$ is unbounded, then $(x_n)$ is unbounded too. By assumption of the Lemma this case is excluded. \\
Case $(ii)$.  Since sequence $(z_n)$ is bounded, by condition \eqref{lp} and Lemma \ref{MM2}, sequence $(x_n)$ is bounded too.
By Lemma \ref{MM1}, we have $\lim\limits_{n \to \infty}x_n = \frac{l}{1+p} \neq 0$. It means that sequence $(x_n)$ is of one sign for large $n$.\\
Case $(iii)$.  By Lemmas \ref{MM1} and \ref{MM2},
we get $\lim\limits_{n \to \infty}x_n = 0$. It means that condition $2)$ of the thesis is satisfied.

Assuming that $(t_n)$ or $(w_n)$ is of one sign, on virtue of analogous arguments as the above, we also get the thesis. 
\end{proof}

\begin{col}\label{Col.1}
Assume that  \eqref{lp} and \eqref{e1} are satisfied. If $(x,y,w,t)$ is a solution of system \eqref{eq:S} with bounded first component and such that one of its components is of one sign, then there exists limit of sequence $(x_n)$ and exactly one of the following two cases are held:
\begin{enumerate}
\item[1)]
$\lim\limits_{n \to \infty}x_n \neq 0$, \\
and sequences $y$, $w$ and $t$ are monotonic for enough large $n$, or
\item[2)] 
sequence $(y_n)$ is of one sign and $\lim\limits_{n \to \infty}x_n = 0$.
\end{enumerate}

\end{col}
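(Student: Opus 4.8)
The plan is to derive Corollary~\ref{Col.1} directly from the preceding Lemma, essentially by promoting its two cases into the sharper dichotomy stated here. The Lemma already tells us that exactly one of the following holds: either all components are eventually of one sign, or $(y_n)$ is of one sign and $\lim_{n\to\infty}x_n=0$. The corollary's Case~2) is literally the Lemma's Case~2), so nothing needs to be done there. The real work is upgrading the Lemma's Case~1) to the corollary's Case~1), namely showing that when every component is eventually of one sign, the limit of $(x_n)$ exists, is nonzero, and $y$, $w$, $t$ are eventually monotonic.

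First I would invoke the Lemma to reduce to its Case~1), assuming all four sequences $x,y,w,t$ are of one sign for large $n$. Since $(y_n)$ in particular is of one sign, I can reuse the argument from the Lemma's proof: the first equation of system~\eqref{eq:S} forces $(z_n)$ to be monotonic, hence $\lim_{n\to\infty}z_n$ exists in $[-\infty,+\infty]$. I would again split into the three cases $(i)$ unbounded, $(ii)$ limit $l\neq 0$, and $(iii)$ limit $0$. Case $(i)$ is excluded by the boundedness hypothesis on the first component together with Lemma~\ref{MM2}. The crucial point is that Case~$(iii)$ must also be excluded in the present situation: if $\lim_{n\to\infty}x_n=0$, then by \eqref{e1} and the sign condition on $(d_n)$ one checks that $f(x_{n-\tau})\to 0$, which is compatible with $(t_n)$ being of one sign only in a degenerate way — but the point is that Case~$(iii)$ is precisely the Lemma's Case~2), which by the exclusivity asserted in the Lemma cannot coincide with Case~1). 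Thus under Case~1) only $(ii)$ survives, giving $\lim_{n\to\infty}x_n=\frac{l}{1+p}\neq 0$ by Lemma~\ref{MM1}.

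Next I would establish the monotonicity of $y$, $w$, and $t$. This follows by reading the equations of system~\eqref{eq:S} in the direction opposite to the Lemma's proof. Since $(t_n)$ is of one sign for large $n$, the third equation $\D w_n=A_nt_n^{1/\alpha}$ (with $A_n>0$) shows $\D w_n$ has a fixed sign, so $(w_n)$ is monotonic. Similarly the second equation $\D y_n=B_nw_n^{1/\beta}$ with $(w_n)$ of one sign gives $\D y_n$ a fixed sign, so $(y_n)$ is monotonic, and the fourth equation $\D t_n=-d_nf(x_{n-\tau})$ with $(d_n)$ of one sign and $f(x_{n-\tau})$ of one sign (by \eqref{e1}, since $(x_n)$ is eventually of one sign) gives $(t_n)$ monotonic. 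This yields monotonicity of all three sequences.

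The main obstacle I anticipate is justifying the exclusivity of the two cases, i.e.\ that Case~1) genuinely forces $\lim_{n\to\infty}x_n\neq 0$ rather than merely being consistent with a nonzero limit. The clean way around this is to lean on the Lemma's own dichotomy: the Lemma asserts that \emph{one of} the two cases holds, and the corollary restates them as mutually exclusive by pinning down the limit of $(x_n)$ in each. So in writing the proof I would take care to phrase Case~1) as ``either the limit is nonzero, giving corollary Case~1), or it is zero, which is corollary Case~2),'' thereby absorbing any overlap into the statement rather than pretending the underlying sign-analysis forbids a zero limit outright. The remaining steps — the three-case split on $z_n$ and the backward reading of the system for monotonicity — are routine once this logical structure is fixed.
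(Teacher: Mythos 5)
Your proposal is correct and takes essentially the same route the paper intends: Corollary \ref{Col.1} is stated there without proof as an immediate consequence of the preceding Lemma, and your derivation --- reduce by the Lemma's dichotomy, rerun the three-case analysis on $(z_n)$ via Lemmas \ref{MM1} and \ref{MM2}, and read system \eqref{eq:S} backwards ($t$ of one sign gives $w$ monotonic, $w$ gives $y$, and $d_n f(x_{n-\tau})$ of one sign gives $t$) --- is exactly that argument. Your closing resolution, routing the Lemma's Case 1) with $\lim\limits_{n \to \infty}x_n = 0$ into the corollary's Case 2) instead of claiming that subcase impossible, is the right way to obtain ``exactly one,'' since the Lemma's two cases are not asserted to be mutually exclusive.
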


\begin{col}\label{C2}
Assume that conditions \eqref{lp} and \eqref{e1} are satisfied, and 
\begin{equation}\label{abc}
\sum\limits_{n=n_0}^{\infty} A_n= \sum\limits_{n=n_0}^{\infty}  B_n = \sum\limits_{n=n_0}^{\infty} C_n = \infty.
\end{equation} 
If $(x,y,w,t)$ is a solution of system \eqref{eq:S} such that 
\begin{equation}\label{lx}
\lim\limits_{n \to \infty}x_n \in \mathbb{R},
\end{equation}
then 
\[
\lim\limits_{n \to \infty}y_n =\lim\limits_{n \to \infty}w_n = \lim\limits_{n \to \infty}t_n =0.
\]
\end{col}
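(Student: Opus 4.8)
The plan is to propagate the hypothesis $\lim_{n\to\infty}x_n=L\in\mathbb{R}$ down the chain of system \eqref{eq:S}, spending one of the three divergence conditions in \eqref{abc} at each stage. First I would record that, since $\lim x_n=L$ and $\lim p_n=p$ (so also $\lim x_{n-\delta}=L$), the companion sequence of \eqref{z} satisfies $\lim_{n\to\infty}z_n=L(1+p)=:Z$, a finite limit. Consequently the telescoping partial sums $\sum_{n=n_0}^{N}\Delta z_n=z_{N+1}-z_{n_0}$ converge, and the first equation of \eqref{eq:S} rewrites this as the convergence of the series $\sum_{n=n_0}^{\infty}C_n\,y_n^{1/\gamma}$.

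The engine of the argument is the elementary fact that if $(\rho_n)$ is positive with $\sum\rho_n=\infty$ and $(v_n)$ is eventually monotone with $\sum\rho_n v_n$ convergent, then $v_n\to 0$: an eventually monotone sequence is eventually of one sign, so convergence of $\sum\rho_n v_n$ forces $\sum\rho_n|v_n|<\infty$, which together with $\sum\rho_n=\infty$ rules out $\liminf|v_n|>0$, and monotonicity of $|v_n|$ upgrades this to $\lim|v_n|=0$. Since $\gamma$ is a ratio of odd positive integers, $s\mapsto s^{1/\gamma}$ is increasing and sign-preserving, so $y_n^{1/\gamma}$ is eventually monotone exactly when $y_n$ is. I would therefore first establish that $(y_n)$, and likewise $(w_n)$ and $(t_n)$, is eventually monotone, and then apply this fact with $\rho_n=C_n$ to conclude $\lim_{n\to\infty}y_n=0$.

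With $\lim y_n=0$ in hand the argument iterates twice. Because $(y_n)$ converges, $\sum_{n=n_0}^{\infty}\Delta y_n$ is finite, so the second equation of \eqref{eq:S} gives convergence of $\sum B_n w_n^{1/\beta}$; the monotone lemma with $\rho_n=B_n$, using $\sum B_n=\infty$, yields $\lim w_n=0$. Convergence of $(w_n)$ then makes $\sum\Delta w_n$ finite, the third equation gives convergence of $\sum A_n t_n^{1/\alpha}$, and the monotone lemma with $\rho_n=A_n$, using $\sum A_n=\infty$, gives $\lim t_n=0$. This is exactly the desired conclusion, and it explains why all three parts of \eqref{abc} are needed, one per link of the chain.

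The main obstacle I expect is the eventual monotonicity of $(y_n)$, $(w_n)$, $(t_n)$ assumed above; this is where condition \eqref{e1} and the one-signedness of $(d_n)$ must enter. If $L\neq 0$ the argument is clean: $(x_n)$ is then eventually of one sign, so by \eqref{e1} the term $f(x_{n-\tau})$ is eventually of one sign, whence $\Delta t_n=-d_nf(x_{n-\tau})$ is of one sign and $(t_n)$ is monotone; then $t_n^{1/\alpha}$ is of one sign, so $\Delta w_n$ and hence $(w_n)$ is monotone, and one more step makes $(y_n)$ monotone. This is precisely the sign-propagation already exploited in Corollary \ref{Col.1}, case~1). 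The delicate point is the case $L=0$ with $(x_n)$ oscillating, where the sign-propagation has no starting sign at $t$; there I would invoke the dichotomy of Corollary \ref{Col.1}, case~2), which guarantees that $(y_n)$ is of one sign, in order to recover enough monotonicity to run the telescoping chain above.
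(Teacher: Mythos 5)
Your telescoping-plus-divergence mechanism is the same one the paper uses, and your treatment of the case $\lim_{n\to\infty}x_n=L\neq 0$ (propagating the sign of $\Delta t_n=-d_nf(x_{n-\tau})$ to get eventual monotonicity of $(t_n)$, $(w_n)$, $(y_n)$, then applying your monotone-series ``engine'' three times with $\sum C_n$, $\sum B_n$, $\sum A_n$) is complete, and in fact more careful than what the paper writes. The genuine gap is exactly where you flagged it: the case $L=0$ with $(x_n)$ oscillating. Your appeal to Corollary \ref{Col.1} there is circular: that corollary's hypothesis requires that \emph{some} component of $(x,y,w,t)$ be of one sign, and in this case $(x_n)$ is not one-signed and nothing whatsoever is known about the signs of $(y_n)$, $(w_n)$, $(t_n)$ --- this is precisely the situation in which the sign propagation has no starting point, so Corollary \ref{Col.1} cannot be invoked. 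Moreover, even if its case 2) conclusion were granted, ``$(y_n)$ is of one sign'' is weaker than what your engine needs: one-signedness together with convergence of $\sum C_n y_n^{1/\gamma}$ and $\sum C_n=\infty$ yields only $\liminf\limits_{n\to\infty}|y_n|=0$, not $\lim\limits_{n\to\infty}y_n=0$; your upgrade from $\liminf$ to $\lim$ uses monotonicity of $|y_n|$, which you do not have in this case.

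For what it is worth, the paper's own proof suffers from the same defect, only less visibly: it argues by contradiction from ``assume $\lim_{n\to\infty}y_n>0$,'' which tacitly presupposes that $\lim y_n$ exists, so it never addresses the oscillating case either. Thus your write-up correctly isolates the weak point of the statement as proved in the paper, but the proposed patch does not close it; as written, both your argument and the paper's establish the corollary only when $(x_n)$ is eventually of one sign, which is, however, the only situation in which the corollary is used later (Theorem \ref{Th.2}, where $x_n>0$ eventually and $\lim\limits_{n\to\infty}x_n=c>0$).
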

\begin{proof}
Since conditions \eqref{lp} and \eqref{lx} hold, $ \lim\limits_{n \to \infty}z_n$ is finite. From the first equation of system \eqref{eq:S}, we get 
\begin{equation}\label{es1}
z_n = z_{n_0} + \sum\limits_{i= n_0}^{n-1} C_i y_i^{\frac{1}{\gamma} }.
\end{equation}
Without loss the generality, for the sake of contradiction, assume that\\ $\lim\limits_{n \to \infty} y_n > 0$. Hence $\lim\limits_{n \to \infty} y_n^{\frac{1}{\gamma}}  > 0$. Since $C_n$ is positive, letting $n$ to infinity, the right hand side of equality \eqref{es1} tends to infinity whereas the left hand side has a finite limit. It follows that $\lim\limits_{n \to \infty} y_n =0$. In the similar manner the remaining part of the thesis is obtained. 
\end{proof}
\begin{thm}\label{Th.2}
If conditions \eqref{lp}, \eqref{e1} and \eqref{abc} are satisfied, $f\colon \mathbb{R} \to \mathbb{R}$ is continuous function,
and series
\begin{equation}\label{zs}
\sum_{i=1}^{\infty} d_i \mbox{ is divergent},
\end{equation} 
then any bounded solution of equation \eqref{eq:1} is almost oscillatory. 
\end{thm}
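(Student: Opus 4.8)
The plan is to argue by contradiction against a bounded nonoscillatory solution that fails to converge to zero. Let $(x_n)$ be a bounded solution of \eqref{eq:1} and suppose it is not oscillatory; then it is eventually of one sign, and without loss of generality we may take it eventually positive, the negative case being symmetric. Passing to the associated solution $(x,y,w,t)$ of system \eqref{eq:S}, the first component $(x_n)$ is bounded and of one sign, so Corollary \ref{Col.1} applies. It guarantees that $\lim_{n\to\infty} x_n$ exists and that exactly one of its two alternatives holds. In alternative $2)$ we already have $\lim_{n\to\infty} x_n = 0$, which is precisely the desired conclusion; so the entire task reduces to excluding alternative $1)$, in which $\lim_{n\to\infty} x_n = L \neq 0$.

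Assume we are in that case. Since $(x_n)$ is bounded, its limit $L$ is finite, so Corollary \ref{C2}, whose hypotheses \eqref{lp}, \eqref{e1}, \eqref{abc} together with finiteness of $\lim_{n\to\infty}x_n$ are all in force, yields $\lim_{n\to\infty} y_n = \lim_{n\to\infty} w_n = \lim_{n\to\infty} t_n = 0$. The idea is now to contradict $\lim_{n\to\infty} t_n = 0$ through the fourth equation of \eqref{eq:S}. Summing $\Delta t_n = -d_n f(x_{n-\tau})$ from $n_0$ gives
\[
t_n = t_{n_0} - \sum_{i=n_0}^{n-1} d_i\, f(x_{i-\tau}).
\]
Because $\tau$ is a fixed integer and $x_n \to L$, we have $x_{i-\tau}\to L$, and continuity of $f$ gives $f(x_{i-\tau})\to f(L)$; by \eqref{e1} the value $f(L)$ has the same nonzero sign as $L$, so $f(x_{i-\tau})$ is eventually bounded away from $0$ with constant sign.

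The main point is then the divergence estimate applied to this sum. Since $(d_n)$ is of one sign and, by \eqref{zs}, $\sum d_i$ diverges, we have $\sum |d_i| = \infty$; combined with $|f(x_{i-\tau})| \geq c > 0$ for all large $i$, the series $\sum d_i f(x_{i-\tau})$ diverges to $+\infty$ or $-\infty$. Hence the right-hand side of the displayed identity is unbounded, forcing $t_n \to \pm\infty$ and contradicting $\lim_{n\to\infty} t_n = 0$. This excludes alternative $1)$, so alternative $2)$ must hold and $\lim_{n\to\infty} x_n = 0$. Together with the case in which $(x_n)$ is oscillatory, this shows every bounded solution of \eqref{eq:1} is oscillatory or converges to zero, which is the claimed almost oscillatory property. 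I expect the only delicate point to be making the sign bookkeeping in the final divergence step fully rigorous; the remainder is a direct assembly of Corollaries \ref{Col.1} and \ref{C2}.
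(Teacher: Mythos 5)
Your proposal is correct and follows essentially the same route as the paper's proof: reduce via Corollary \ref{Col.1} to a bounded nonoscillatory solution with finite nonzero limit, use continuity of $f$ together with \eqref{e1} to bound $f(x_{n-\tau})$ away from zero with constant sign, and then sum the fourth equation of system \eqref{eq:S}, invoking Corollary \ref{C2} (so that $t_n \to 0$) to contradict the divergence of $\sum d_i f(x_{i-\tau})$ forced by \eqref{zs}. The only cosmetic difference is that you frame the argument as excluding alternative $1)$ of Corollary \ref{Col.1}, while the paper assumes directly a nonoscillatory bounded solution not tending to zero; the sign bookkeeping you flag as delicate is handled in the paper exactly as you sketch it, by splitting into the cases $d_n>0$ and $d_n<0$.
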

\begin{proof}
For the sake of contradiction assume that equation \eqref{eq:1} has a nonoscillatory bounded solution which does not approach zero. Without loss of generality we assume that $x_n > 0$ for large $n$. From the above and Corollary \ref{Col.1}, we have that finite limit $\lim\limits_{n \to \infty} x_n$ exists. Set $\lim\limits_{n \to \infty} x_n =c \in (0, \infty)$. So, by \eqref{e1}, we have $f(c)>0$.
Then there exists a positive integer $n_1$ such that $f(x_{n-\tau}) \geq \frac{f(c)}{2}$ for $n\geq n_1$. This implies, that:\\
if $(d_n)$ is a positive sequence, then
\[
\sum_{i=n_1}^{\infty} d_i f(x_{i-\tau}) \geq \frac{f(c)}{2}\sum_{i=n_1}^{\infty} d_i = + \infty;
\] 
if  $(d_n)$ is a negative sequence, then
\[
\sum_{i=n_1}^{\infty} d_i f(x_{i-\tau}) \leq \frac{f(c)}{2}\sum_{i=n_1}^{\infty} d_i = - \infty.
\] 
Summing the last equation of system \eqref{eq:S} from $1$ into $n-1$, we have
\[
t_n - t_1= -\sum_{i=1}^{n-1} d_if(x_{i-\tau}).
\]
By Corollary \ref{C2}, we get that sequence $(t_n)$ tends to zero as $n$ tends to $\infty$. Hence, letting $n$ to $\infty$, we obtain
\[
t_1= \sum_{i=1}^{\infty} d_i f(x_{i-\tau}).
\]
The left--hand side of the above equation is a constant whereas the right--hand is $-\infty$ or $+\infty$. This contradiction ended the proof.
\end{proof}
The following two examples show equations of type  \eqref{eq:1} the coefficients of which fulfill the assumptions of Theorem \ref{Th.2} and both of them have an almost oscillatory solution. The first one has an asymptotically zero solution, and the second one has an oscillatory solution. 
\begin{exm}
Consider the equation
\[
\D \left(n \D ^3 (x_n+\frac{1}{4}x_{n-2})\right)+(1-n)x_{n+3}=0.
\]
The above equation has solution $x_n=-\frac{1}{2^n}$ which tends to zero. 
\end{exm}
\begin{exm}
Consider the equation
\[
\D \left(n \D ^3 (x_n+\frac{1}{4}x_{n-2})\right)+10(2n+1)x_{n+3}=0.
\]
This equation has oscillatory solution $x_n=\frac{(-1)^{n}}{10}$. 
\end{exm}
{\small

}

\end{document}